\newtheorem{theorem}{Theorem}
\newtheorem{definition}{Definition}
\newtheorem{proposition}{Proposition}
\newtheorem{remark}{Remark}
\newenvironment{proof}{{\em Proof.}}{\hfill $\diamond$ \vskip 4.5pt}
\newfont{\footsc}{cmcsc10 at 8truept}
\newfont{\footbf}{cmbx10 at 8truept}
\newfont{\footrm}{cmr10 at 10truept}
\title{Variations on Narrow Dots-and-Boxes and Dots-and-Triangles}
\author{Adam Jobson\\
\small Department of Mathematics\\[-0.8ex]
\small University of Louisville\\[-0.8ex]
\small Louisville, KY 40292 USA\\[-0.8ex]
\small \texttt{asjobs01@louisville.edu}
\and
Levi Sledd\\
\small Department of Mathematics\\[-0.8ex]
\small Centre College\\[-0.8ex]
\small Danville, KY 40422 USA\\[-0.8ex]
\small \texttt{levi.sledd@centre.edu }
\and
Susan C. White\\
\small Department of Mathematics\\[-0.8ex]
\small Bellarmine University\\[-0.8ex]
\small Louisville, KY 40205 USA\\[-0.8ex]
\small \texttt{scwhite@bellarmine.edu}
\and
D. Jacob Wildstrom\\
\small Department of Mathematics \\[-0.8ex]
\small University of Louisville \\ [-0.8ex]
\small Louisville, KY 40292 USA\\[-0.8ex]
\small \texttt{djwild01@louisville.edu }
}
\date{\today} 
\begin{document}
\newcommand{\DBlowdot}[1]{(#1,0)}
\newcommand{\DBhighdot}[1]{(#1,1)}
\newcommand{\DBcoin}[1]{({#1+0.5},0.5)}
\newcommand{\DBlowground}[1]{({#1+0.5},0)}
\newcommand{\DBhighground}[1]{({#1+0.5},1)}
\newcommand{\DBleftground}[1]{(#1,0.5)}
\newcommand{\DBrightground}[1]{({#1+1},0.5)}
\newcommand{\DTlowdot}[1]{(#1,0)}
\newcommand{\DThighdot}[1]{({#1+0.5},{sqrt(3)/2})}
\newcommand{\DTlowcoin}[1]{({#1+0.5},{sqrt(3)/6})}
\newcommand{\DThighcoin}[1]{({#1+1},{sqrt(3)/3})}
\newcommand{\DTlowground}[1]{({#1+0.5},{sqrt(3)/6-0.5})}
\newcommand{\DThighground}[1]{({#1+1},{sqrt(3)/3+0.5})}
\newcommand{\DTleftground}[1]{({#1},{sqrt(3)/3})}
\newcommand{\DTrightground}[1]{({#1+1},{sqrt(3)/3})}
\def\nodesize{0.1}

\tikzset{dot/.style={fill=black}}
\tikzset{coin/.style={fill=white}}
\tikzset{markedcoin/.style={fill=gray}}
\tikzset{basegraph/.style={very thick}}
\tikzset{firstchain/.style={red,very thick,densely dashed}}
\tikzset{secondchain/.style={green,very thick,densely dotted}}
\tikzset{thirdchain/.style={blue,very thick,densely dashdotted}}
\tikzset{chainother/.style={thick}}

\maketitle

\begin{abstract}

We verify a conjecture of Nowakowski and Ottaway that closed $1\times n$ Dots-and-Triangles is a first-player win when $n\neq 2$ \cite{Now}. We also prove that in both the open and closed $1\times n$ Dots-and-Boxes games where $n$ is even, the first player can guarantee a tie.\\
\end{abstract}

\section{The Game}

The classic children's game of Dots-and-Boxes has been well studied in \cite{Berl} and \cite{BCG}. The game begins with a square array of dots. Players take turns drawing an edge that connects two neighboring dots. A player who draws the fourth edge of a box claims the box and immediately takes another turn. The game ends when all boxes have been completed, and the player who has claimed more boxes is declared the winner. Variations of the game may be played, such as Dots-and-Triangles, which is played on a triangular board shape, and the Swedish and Icelandic games, in which some edges are drawn before the game begins \cite{Berl}.

In this paper we consider $1\times n$ (``narrow'') versions of both Dots-and-Boxes and Dots-and-Triangles. The $1\times n$ Dots-and-Boxes game consists of two rows of $n+1$ dots each; on completion of the game, $n$ boxes will have been enclosed. The $1\times n$ Dots-and-Triangles game consists of a row of $n$ dots on top and $n+1$ dots on bottom; on completion of the game, $2n-1$ triangles will have been enclosed. In the \emph{closed} narrow games, the top and side exterior edges have been drawn before the game begins. The \emph{open} version of the game begins with no such edges. The starting configurations for all four variants are shown in Figure~\ref{fig:gamestart}.\\
\indent To simplify our analysis, we will consider the graph-theoretic dual version of Dots-and-Boxes/Triangles known as Strings-and-Coins; this dual version of the game is thoroughly presented by Berlekamp in \cite{Berl}. In this dual game, each box or triangle in the original game corresponds to a vertex, or ``coin.'' Two vertices are adjacent in the dual game if and only if their corresponding faces share an edge \emph{that has not yet been drawn} in the original game. In addition, we think of the exterior of the game board as a single vertex called the ``ground.'' The ground vertex is not drawn. Drawing an edge in the original game corresponds to removing an edge, or ``cutting a string,'' in the dual game. Completing and claiming a face in the original game corresponds to isolating and taking the vertex, or ``capturing the coin,'' in the dual game. The ground vertex cannot be taken. Every face-enclosing game such as Dots-and-Boxes can be converted to an equivalent Strings-and-Coins game, although the converse is not true. The Strings-and-Coins presentations of the games in Figure~\ref{fig:gamestart} are shown in Figure~\ref{fig:stringsandcoins}. Following Berlekamp's convention, we use small arrows to denote an edge that goes to the ground. Note that the Strings-and-Coins presentation illustrates that closed Dots-and-Boxes and open Dots-and-Triangles are very similar; the latter is just the former with an extra edge at each extreme. 
\\
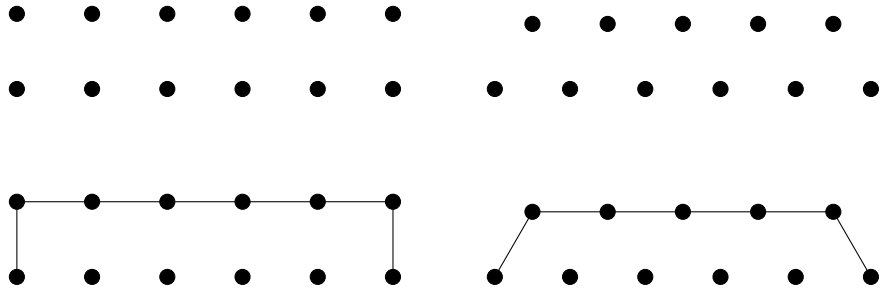
\begin{figure}[h]
  \centering
  \begin{tikzpicture}
    \foreach \x in {0,1,...,5} {
      \filldraw[dot] \DBlowdot{\x} circle (\nodesize);
      \filldraw[dot] \DBhighdot{\x} circle (\nodesize);
    }
    \begin{scope}[yshift=-2.5cm]
      \draw \DBlowdot0--\DBhighdot0;
      \foreach \x in {0,1,...,5} {
        \filldraw[dot] \DBlowdot{\x} circle (\nodesize);
        \filldraw[dot] \DBhighdot{\x} circle (\nodesize);
      }
      \foreach \x in {1,...,5} {
        \draw \DBhighdot{\x-1}--\DBhighdot{\x}; 
      }
      \draw \DBlowdot5--\DBhighdot5;
    \end{scope}
  \end{tikzpicture}
  \quad\quad\quad
  \begin{tikzpicture}
    \filldraw[dot] \DTlowdot0 circle (\nodesize);
    \foreach \x in {0,...,4} {
      \filldraw[dot] \DTlowdot{\x+1} circle (\nodesize);
      \filldraw[dot] \DThighdot{\x} circle (\nodesize);
    }
    \begin{scope}[yshift=-2.5cm]
      \draw \DTlowdot0--\DThighdot0;
      \filldraw[dot] \DTlowdot0 circle (\nodesize);
      \foreach \x in {0,...,4} {
        \filldraw[dot] \DTlowdot{\x+1} circle (\nodesize);
        \filldraw[dot] \DThighdot{\x} circle (\nodesize);
      }
      \foreach \x in {1,...,4} {
        \draw \DThighdot{\x-1}--\DThighdot{\x}; 
      }
      \draw \DThighdot4--\DTlowdot5;
    \end{scope}
  \end{tikzpicture}
  \caption{Starting positions for open (top) and closed (bottom) $1\times 5$ games of Dots-and-Boxes (left) and Dots-and-Triangles (right).}
	\label{fig:gamestart}

\end{figure}

\begin{figure}[h]
  \centering
  \begin{tikzpicture}
    \draw[->] \DBcoin0--\DBleftground0;
    \draw[->] \DBcoin4--\DBrightground4;
    \foreach \x in {1,...,4} {
      \draw \DBcoin{\x-1}--\DBcoin{\x};
    }
    \foreach \x in {0,...,4} {
      \draw[<->] \DBlowground{\x}--\DBcoin{\x}--\DBhighground{\x};
      \filldraw[coin] \DBcoin{\x} circle (\nodesize);
    }
    \begin{scope}[yshift=-2cm]
      \foreach \x in {1,...,4} {
        \draw \DBcoin{\x-1}--\DBcoin{\x};
      }
      \foreach \x in {0,...,4} {
        \draw[<->] \DBlowground{\x}--\DBcoin{\x};
        \filldraw[coin] \DBcoin{\x} circle (\nodesize);
      }
    \end{scope}
  \end{tikzpicture}
  \quad\quad
  \begin{tikzpicture}
    \draw[->] \DTlowcoin0--\DTleftground0;
    \draw[->] \DTlowcoin4--\DTrightground4;
    \foreach \x in {0,...,3} {
      \draw \DTlowcoin{\x}--\DThighcoin{\x}--\DTlowcoin{\x+1};
      \draw[->] \DThighcoin{\x}--\DThighground{\x};
      \filldraw[coin] \DThighcoin{\x} circle (\nodesize);
    }
    \foreach \x in {0,...,4} {
      \draw[<-] \DTlowground{\x}--\DTlowcoin{\x};
      \filldraw[coin] \DTlowcoin{\x} circle (\nodesize);
    }
    \begin{scope}[yshift=-2cm]
      \foreach \x in {0,...,3} {
        \draw \DTlowcoin{\x}--\DThighcoin{\x}--\DTlowcoin{\x+1};
        \filldraw[coin] \DThighcoin{\x} circle (\nodesize);
      }
      \foreach \x in {0,...,4} {
        \draw[<-] \DTlowground{\x}--\DTlowcoin{\x};
        \filldraw[coin] \DTlowcoin{\x} circle (\nodesize);
      }
    \end{scope}
  \end{tikzpicture}
  \caption{Strings-and-Coins equivalents to open (top) and closed (bottom) $1\times 5$ games of Dots-and-Boxes (left) and Dots-and-Triangles (right).}
	\label{fig:stringsandcoins}

\end{figure}
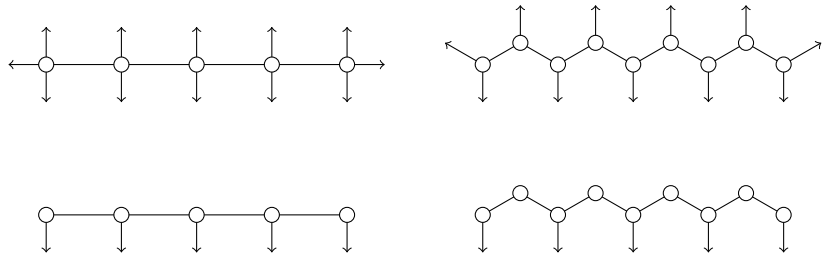

\section{Double-Dealing}
Astute players of Dots-and-Boxes are well acquainted with the strategy of double-dealing. We provide a brief review of this technique in this section as it plays an important role in the proof of our main result. Considering the Strings-and-Coins version of a closed narrow game, we say a vertex of degree one is an available vertex. We may assume that any player who has a winning strategy will take any available vertices, except in the special cases shown in Figure~\ref{fig:doubledealing}, as argued on pp. 42--43 of \cite{Berl}, where an available vertex is referred to as a ``capturable coin.'' In each of the three cases shown, a player has two options. Either the player may remove edge $X$ first and then edge $Y$, in which case they take two vertices and move again (if possible) in the remaining game, or the player may remove edge $Y$ first and end their turn. A player who is faced with any of the cases shown in Figure~\ref{fig:doubledealing} is said to have a \emph{double-dealing opportunity}. A player double-deals when he removes edge $Y$, thus declining to take two vertices that would have been taken had he removed edge $X$.\\
\begin{figure}[h]
  \centering
  \begin{tikzpicture}
    \draw[<-] \DTlowground0--\DTlowcoin0--\DThighcoin0;
    \draw \DTlowground0 coordinate (a);
    \draw \DTlowcoin0 coordinate (b);
    \draw ($(a)!0.5!(b)$) node[left] {Y};
    \draw \DThighcoin0 coordinate (a);
    \draw ($(a)!0.5!(b)$) node[above left] {X};
    \filldraw[coin] \DTlowcoin0 circle (\nodesize);
    \filldraw[coin] \DThighcoin0 circle (\nodesize);

    \draw[->] \DTlowcoin2--\DThighcoin2--\DTlowcoin3--\DTlowground3;
    \draw \DTlowcoin2 coordinate (a);
    \draw \DThighcoin2 coordinate (b);
    \draw ($(a)!0.5!(b)$) node[above left] {X};
    \draw \DTlowcoin3 coordinate (a);
    \draw ($(a)!0.95!(b)$) node[above right] {Y};
    \filldraw[markedcoin] \DTlowcoin3 circle (\nodesize);
    \filldraw[coin] \DThighcoin2 circle (\nodesize);
    \filldraw[coin] \DTlowcoin2 circle (\nodesize);

    \draw[->] \DTlowcoin5--\DThighcoin5--\DTlowcoin6--\DThighcoin6;
    \draw \DTlowcoin5 coordinate (a);
    \draw \DThighcoin5 coordinate (b);
    \draw ($(a)!0.5!(b)$) node[above left] {X};
    \draw \DTlowcoin6 coordinate (a);
    \draw ($(a)!0.75!(b)$) node[above right] {Y};
    \filldraw[coin] \DTlowcoin5 circle (\nodesize);
    \filldraw[coin] \DThighcoin5 circle (\nodesize);
    \filldraw[markedcoin] \DTlowcoin6 circle (\nodesize);
    \filldraw[markedcoin] \DThighcoin6 circle (\nodesize);

  \end{tikzpicture}
  \caption{Double-dealing opportunities. Each shaded vertex may be incident to other edges not shown in the figure.}
	\label{fig:doubledealing}
\end{figure}
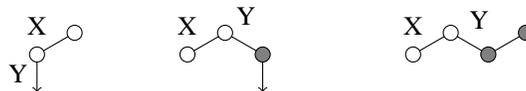

\indent A player who faces a double-dealing opportunity is in a powerful position, as explored in \cite{Berl, buzzard}. Let us consider a graph representing a position in a Dots-and-Boxes/Triangles game, and suppose that a player with a nonnegative net score\footnote{At any point in the game, Player A's score is the number of boxes/triangles which Player A has already claimed. In a game against Player B, Player A's net score is Player A's score minus Player B's score.} faces a double-dealing opportunity in the game. Let $G'$ denote the graph that will remain after edges $X$ and $Y$, as in Figure~\ref{fig:doubledealing}, have been removed and those two available vertices have been taken. Let $n$ denote the net advantage in score to the player who plays \emph{second} in $G'$, from that point until the game ends, assuming both players play optimally. Then the player with the double-dealing opportunity can choose whether or not to double deal. If they do not double-deal, then they take those two vertices and move first in $G'$. In this case, they will see their net score increase by $2-n$. If they double-deal, then they decline the two vertices. Their opponent can then collect them (or choose not to do so, though the opponent would derive no benefit from passing up these capture opportunities) and move first in $G'$, so that the double-dealing player will see their net score increase by at least $-2+n$. At least one of $2-n$ and $-2+n$ must be nonnegative. We may thus conclude as follows: \\
\begin{remark} The first player to be presented with a double-dealing opportunity at a point in the game when that player has a nonnegative net score can guarantee at least a tie. \end{remark}

\section{Base Graph and Chains}
Here we introduce some terminology specific to the Strings-and-Coins graphs representing positions in closed narrow Dots-and-Boxes/Triangles. The concept of a \emph{component} will be equivalent to the graph-theoretical conception of that notion, except that the components are not connected through the ground vertex. We say that an edge is a \emph{leg} if it is incident to the ground vertex; such incidences are conventionally represented by arrowheads. If a component has two or more legs, we say the two outermost legs are \emph{exterior legs}, and any other leg in that component is an \emph{interior leg}. 
\begin{definition} Given a graph $G$ representing a position in a game of narrow Dots-and-Boxes/Triangles, let $V_{BG}$ be the set of all vertices that are incident to an exterior leg, incident to an interior leg, or part of a path joining two vertices that are both incident to legs. Then the graph induced by $V_{BG}$ and the ground vertex is called the \emph{base graph}. Any vertices and edges that are not part of the base graph are said to be \emph{pendant vertices} and \emph{pendant edges}, respectively. 
\end{definition}

\begin{figure}[h]
  \centering
  \begin{tikzpicture}
    \draw[basegraph,<->] \DTlowground0--\DTlowcoin0--\DThighcoin0--
    \DTlowcoin1--\DThighcoin1--
    \DTlowcoin2--\DTlowground2;
    \filldraw[basegraph, coin] \DTlowcoin0 circle (\nodesize);
    \filldraw[basegraph, coin] \DThighcoin0 circle (\nodesize);
    \filldraw[basegraph, coin] \DTlowcoin1 circle (\nodesize);
    \filldraw[basegraph, coin] \DThighcoin1 circle (\nodesize);
    \filldraw[basegraph, coin] \DTlowcoin2 circle (\nodesize);

    \draw[<-] \DTlowground3--\DTlowcoin3;
    \filldraw[coin] \DTlowcoin3 circle (\nodesize);

    \draw \DTlowcoin4--\DThighcoin4--\DTlowcoin5;
    \filldraw[coin] \DTlowcoin4 circle (\nodesize);
    \filldraw[coin] \DThighcoin4 circle (\nodesize);
    \filldraw[coin] \DTlowcoin5 circle (\nodesize);

    \draw[basegraph,<->] \DTlowground6--\DTlowcoin6--\DThighcoin6--\DTlowcoin7--\DTlowground7;
    \draw \DTlowcoin7--\DThighcoin7;
    \filldraw[basegraph, coin] \DTlowcoin6 circle (\nodesize);
    \filldraw[basegraph, coin] \DThighcoin6 circle (\nodesize);
    \filldraw[basegraph, coin] \DTlowcoin7 circle (\nodesize);
    \filldraw[coin] \DThighcoin7 circle (\nodesize);
    
    \draw[<-] \DTlowground8--\DTlowcoin8--\DThighcoin8;
    \filldraw[coin] \DTlowcoin8 circle (\nodesize);
    \filldraw[coin] \DThighcoin8 circle (\nodesize);

    \draw[basegraph,<->] \DTlowground9--\DTlowcoin9--\DThighcoin9
    --\DTlowcoin{10}--\DThighcoin{10}--\DTlowcoin{11}--\DThighcoin{11}--\DTlowcoin{12}--\DTlowground{12};
    \draw \DTlowcoin{12}--\DThighcoin{12}--\DTlowcoin{13};
    \filldraw[basegraph, coin] \DTlowcoin9 circle (\nodesize);
    \filldraw[basegraph, coin] \DThighcoin9 circle (\nodesize);
    \filldraw[basegraph, coin] \DTlowcoin{10} circle (\nodesize);
    \filldraw[basegraph, coin] \DThighcoin{10} circle (\nodesize);
    \filldraw[basegraph, coin] \DTlowcoin{11} circle (\nodesize);
    \filldraw[basegraph, coin] \DThighcoin{11} circle (\nodesize);
    \filldraw[basegraph, coin] \DTlowcoin{12} circle (\nodesize);
    \filldraw[coin] \DThighcoin{12} circle (\nodesize);
    \filldraw[coin] \DTlowcoin{13} circle (\nodesize);

    \draw[->] \DThighcoin{13}--\DTlowcoin{14}--\DTlowground{14};
    \filldraw[coin] \DThighcoin{13} circle (\nodesize);
    \filldraw[coin] \DTlowcoin{14} circle (\nodesize);

  \end{tikzpicture}
  \caption{A game of Dots-and-Triangles in progress. The thick
    vertices and edges make up the base graph. All other vertices and edges are
    pendant vertices and edges.}
		\label{fig:basegraphs}
\end{figure}
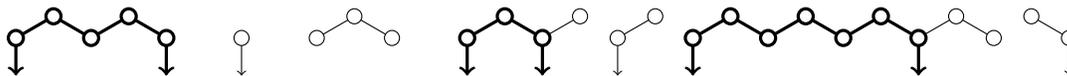

An example of the base graph of a game is depicted in Figure~\ref{fig:basegraphs}. Note that components with fewer than two legs do not have interior or exterior legs, so that they are definitionally excluded from the base graph. We partition the base graph into subgraphs called chains as follows.

\begin{definition}
A \emph{chain of length $k\geq0$} is a collection of $k+1$ edges in the base graph satisfying one of the following.
 \begin{itemize}

  \item Every interior leg is a chain of length zero.

  \item An exterior leg together with the edges of the path connecting it to the nearest interior leg in the same component comprise a chain, which excludes the interior leg.

  \item Edges of the path connecting two consecutive interior legs in the same component comprise a chain, again excluding the interior legs.

  \item If a component has two exterior legs and no interior legs, then all edges of the base graph in that component comprise a chain of length at least two.

  \end{itemize}

\end{definition}

Note that only edges of the base graph can be part of a chain. A player \emph{opens a chain} when the player removes one of its edges. Intuitively, we may think of the length of a chain as the number of vertices that can be taken by a player in a single turn once his opponent has opened the chain. Examples of chains of various lengths are shown in Figure~\ref{fig:chainexamples} for the Dots-and-Triangles game.

\begin{figure}
  \centering
  Dots-and-Triangles\\
  \begin{tikzpicture}
    \draw[firstchain,<-] \DTlowground0--\DTlowcoin0--\DThighcoin0--\DTlowcoin1;
    \draw \DTlowcoin0 node[above left,align=center] {chain\\(length 2)};

    \draw[secondchain,->] \DTlowcoin1--\DTlowground1;
    \draw \DTlowground1 node[below,align=center] {chain\\(length 0)};
    \draw[thirdchain] \DTlowcoin1--\DThighcoin1--\DTlowcoin2;
    \draw \DThighcoin1 node[above,align=center] {chain\\(length 1)};
    \draw[secondchain,->] \DTlowcoin2--\DTlowground2;
    \draw[firstchain,->] \DTlowcoin2--\DThighcoin2--\DTlowcoin3--\DThighcoin3--\DTlowcoin4--\DTlowground4;
    \draw \DThighcoin3 node[above,align=center] {chain\\(length 4)};
    \filldraw[chainother,coin] \DTlowcoin0 circle (\nodesize);
    \filldraw[chainother,coin] \DThighcoin0 circle (\nodesize);
    \filldraw[chainother,coin] \DTlowcoin1 circle (\nodesize);
    \filldraw[chainother,coin] \DThighcoin1 circle (\nodesize);
    \filldraw[chainother,coin] \DTlowcoin2 circle (\nodesize);
    \filldraw[chainother,coin] \DThighcoin2 circle (\nodesize);
    \filldraw[chainother,coin] \DTlowcoin3 circle (\nodesize);
    \filldraw[chainother,coin] \DThighcoin3 circle (\nodesize);
    \filldraw[chainother,coin] \DTlowcoin4 circle (\nodesize);

    \draw[<->,chainother] \DTlowground6--\DTlowcoin6--\DThighcoin6--\DTlowcoin7--\DTlowground7;
    \filldraw[chainother,coin] \DTlowcoin6 circle (\nodesize);
    \filldraw[chainother,coin] \DThighcoin6 circle (\nodesize);
    \filldraw[chainother,coin] \DTlowcoin7 circle (\nodesize);
    \draw \DTlowground{6.5} node[below,align=center] {chain\\(length 3)};

    \draw[<->,chainother] \DTlowground9--\DTlowcoin9--\DThighcoin9--\DTlowcoin{10}--\DThighcoin{10}--\DTlowcoin{11}--\DTlowground{11};
    \filldraw[chainother,coin] \DTlowcoin9 circle (\nodesize);
    \filldraw[chainother,coin] \DThighcoin9 circle (\nodesize);
    \filldraw[chainother,coin] \DTlowcoin{10} circle (\nodesize);
    \filldraw[chainother,coin] \DThighcoin{10} circle (\nodesize);
    \filldraw[chainother,coin] \DTlowcoin{11} circle (\nodesize);
    \draw \DTlowground{10} node[below,align=center] {chain\\(length 5)};
  \end{tikzpicture}
  \caption{Examples of Chains}
	\label{fig:chainexamples}
\end{figure}
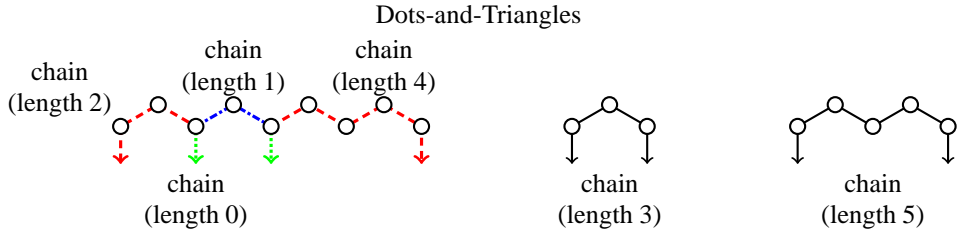

We say a chain is \emph{long} if its length is at least three, \emph{medium} if its length is exactly two, and \emph{short} if its length is zero or one. Observe that in Dots-and-Triangles, a chain has even length if and only if it contains exactly one exterior leg, although that is not the case with Dots-and-Boxes. It follows that, if a component in Dots-and-Triangles has two exterior legs and no interior leg, then the edges of that component make up a long chain. It is easily verified that any game of closed narrow Dots-and-Triangles eventually reduces to one or more components with two exterior legs and no interior legs. Thus, in a game of closed narrow Dots-and-Triangles, eventually one player must open a long chain.

We will call an edge \emph{bad} if its removal creates a double-dealing opportunity and \emph{good} otherwise. Based on the scenarios presented in Figure~\ref{fig:doubledealing}, we may characterize edges of the base graph as good or bad based on the length of their chains. Every edge of a long chain is bad. Unless it is incident to a pendant edge, every edge of a length-one chain is good. Chains of length zero are always good edges. In particular, in Dots-and-Triangles, every edge of a short chain is good. In a medium chain, the middle edge is good unless it is incident to a pendant edge, and the two outer edges are bad. The good and bad edges of some example short and medium chains are shown in Figure~\ref{fig:goodandbad}. In summary, 
\begin{proposition} An edge is good if and only if it is not incident to a pendant edge and either it is in a short chain or it is the middle edge in a medium chain.
\end{proposition}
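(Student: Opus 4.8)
The plan is to prove the biconditional by cashing out what a ``double-dealing opportunity'' actually looks like locally, using the three configurations of Figure~\ref{fig:doubledealing}, and then reading off which edges of a chain can or cannot be the edge $X$ (equivalently, cannot be the edge whose removal leaves a degree-one vertex adjacent to another relevant vertex in the pattern) in one of those pictures. First I would recall the structural fact already established: an edge of the base graph lies in exactly one chain, and chains are classified as long ($\ge 3$), medium ($=2$), or short ($\le 1$). So it suffices to check, for an edge $e$ of the base graph together with the information of whether $e$ is incident to a pendant edge, exactly when removing $e$ produces one of the three local pictures of Figure~\ref{fig:doubledealing}.

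The forward direction (good $\Rightarrow$ not incident to a pendant edge, and either in a short chain or the middle edge of a medium chain) I would prove by contraposition, going through the cases of a bad edge. If $e$ is incident to a pendant edge, then removing $e$ frees up that pendant structure and one checks directly that a capturable coin with the double-dealing shape appears (this is why the ``incident to a pendant edge'' caveat is attached to the length-one and medium cases). If $e$ is an edge of a long chain, then since a long chain has at least four edges lined up in a path (possibly terminating in legs), removing any one of them leaves a sub-path of length $\ge 2$ attached in the pattern of the third picture of Figure~\ref{fig:doubledealing}, so $e$ is bad. If $e$ is an outer edge of a medium chain, removing it leaves a path of two coins, matching the first or second picture, so again $e$ is bad. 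That exhausts the bad edges, giving the forward implication.

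For the converse I would argue that if $e$ is not incident to a pendant edge and is either in a short chain or is the middle edge of a medium chain, then no matter which of the three patterns one tries to match, the removal of $e$ cannot create it: a length-zero chain is a single interior leg, whose removal just detaches the ground from a vertex that still has degree $\ge 2$ on its chains, so no capturable coin of the right type is created; a length-one chain not touching a pendant edge is a single edge between two vertices each of which is otherwise only attached to legs or to other good structure, and removing it produces two available vertices but not the asymmetric ``take two and keep going'' configuration; and the middle edge of a medium chain, when not incident to a pendant edge, splits the chain into two length-zero chains whose endpoints each become available but independently, again without the double-dealing shape. In each subcase I would point to the precise feature of Figure~\ref{fig:doubledealing} that fails to be reproduced.

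The main obstacle I expect is bookkeeping the pendant-edge caveat carefully: the shaded vertices in Figure~\ref{fig:doubledealing} ``may be incident to other edges not shown,'' so the distinction between good and bad for length-one and medium chains genuinely depends on whether the relevant chain vertex carries a pendant edge, and I must make sure the case analysis covers (i) pendant edge at an endpoint versus (ii) pendant edge at an interior vertex of the chain, and confirm that in Dots-and-Triangles the middle vertex of a medium chain being incident to a pendant edge is exactly the borderline case that flips the middle edge from good to bad. A secondary, purely expository obstacle is that the proposition conflates Dots-and-Boxes and Dots-and-Triangles, so I should note that the parity remark (even-length chains have one exterior leg in Dots-and-Triangles) is not needed here — the good/bad classification is the same statement for both games once phrased in terms of chain length — and keep the argument game-agnostic, referring only to the local pictures of Figure~\ref{fig:doubledealing} and the chain-length definitions.
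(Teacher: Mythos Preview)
Your approach is exactly the paper's: Proposition~1 is presented there as a one-line summary of the case-by-case discussion in the paragraph immediately preceding it, so there is no separate formal proof to compare against. The paper simply asserts, for each chain length, which edges are good or bad by inspection of the configurations in Figure~\ref{fig:doubledealing}, and your plan is to write that inspection out carefully.

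One concrete slip to fix: by definition a chain of length $k$ has $k+1$ edges, so a length-one chain has \emph{two} edges, not one, and removing one of them produces a \emph{single} available vertex, not two. In Dots-and-Triangles a length-one chain is a path $\text{low}_i$--$\text{high}$--$\text{low}_{i+1}$ between two vertices carrying interior legs; removing either edge leaves the high coin at degree $1$ adjacent to a low coin of degree $3$ (interior leg plus two base-graph neighbours), which matches none of the patterns in Figure~\ref{fig:doubledealing}. Your conclusion for that sub-case is right, but the local picture you describe (``single edge,'' ``two available vertices'') is wrong and would need to be rewritten. Similarly, removing the middle edge of a medium chain does not split it into ``two length-zero chains''---the two outer edges cease to belong to any chain, since one endpoint's component now has only one leg and the other edge becomes pendant---though again your conclusion (two independent degree-$1$ vertices, each adjacent only to ground or to a vertex of degree $\ge 3$, hence no double-dealing) is correct.
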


\begin{figure}[h]

  \centering

  Short chains\\

  \begin{tikzpicture}

    \draw \DTlowcoin0--\DThighcoin0--\DTlowcoin1;

    \draw[very thick] \DTlowcoin1--\DThighcoin1--\DTlowcoin2;

    \draw \DTlowcoin2--\DThighcoin2--\DTlowcoin3;

    \draw \DTlowcoin1 coordinate (a);

    \draw \DThighcoin1 coordinate (b);

    \draw[stealth-,bend left=20,densely dotted] ($(a)!0.5!(b)$) to ++(-0.25,0.5) node[above] {good};

    \draw \DTlowcoin2 coordinate (a);

    \draw[stealth-,bend right=20,densely dotted] ($(a)!0.5!(b)$) to ++(0.25,0.5) node[above] {good};

    \foreach \x in {0,...,3} {

      \draw[<-] \DTlowground{\x}--\DTlowcoin{\x};

      \filldraw[coin] \DTlowcoin{\x} circle (\nodesize);

    }

    \foreach \x in {0,...,2} {

      \filldraw[coin] \DThighcoin{\x} circle (\nodesize);

    }

  \end{tikzpicture}\quad\quad
\begin{tikzpicture}

    \draw \DBcoin0--\DBcoin1;

    \draw[<-,very thick] \DBlowground1--\DBcoin1--\DBcoin2;

    \draw[<->] \DBlowground2--\DBcoin2--\DBcoin3--\DBlowground3;

    \draw[stealth-,bend left=20,densely dotted] \DBcoin{1.5} to ++(0.25,0.5) node[above] {bad};

    \draw \DBcoin1 coordinate (a);

    \draw \DBlowground1 coordinate (b);

    \draw[stealth-,bend right=20,densely dotted] ($(a)!0.5!(b)$) to ++(-0.5,-0.25) node[left] {bad};

    \foreach \x in {0,...,3} {

      \filldraw[coin] \DBcoin{\x} circle (\nodesize);

    }

  \end{tikzpicture}\\
	
	\vspace*{0.2in}

  Medium chains\\

  \begin{tikzpicture}

    \draw[<-,very thick] \DTlowground0--\DTlowcoin0--\DThighcoin0--\DTlowcoin1;

    \draw[->] \DTlowcoin1--\DThighcoin1--\DTlowcoin2--\DTlowground2;

    \draw[->] \DTlowcoin1--\DTlowground1;

    \draw \DTlowground0 coordinate (b);

    \draw \DTlowcoin0 coordinate (a);

    \draw[stealth-,bend right=20,densely dotted] ($(a)!0.5!(b)$) to ++(-0.5,-0.25) node[left] {bad};

    \draw \DThighcoin0 coordinate (b);

    \draw[stealth-,bend left=20,densely dotted] ($(a)!0.5!(b)$) to ++(-0.25,0.5) node[above] {good};

    \draw \DTlowcoin1 coordinate (a);

    \draw[stealth-,bend right=20,densely dotted] ($(a)!0.5!(b)$) to ++(0.25,0.5) node[above] {bad};

    \foreach \x in {0,...,2} {

      \filldraw[coin] \DTlowcoin{\x} circle (\nodesize);

    }

    \foreach \x in {0,...,1} {

      \filldraw[coin] \DThighcoin{\x} circle (\nodesize);

    }

  \end{tikzpicture}\quad\quad
 \begin{tikzpicture}

    \draw \DThighcoin{-1}--\DTlowcoin0;

    \draw[<-,very thick] \DTlowground0--\DTlowcoin0--\DThighcoin0--\DTlowcoin1;

    \draw[->] \DTlowcoin1--\DThighcoin1--\DTlowcoin2--\DTlowground2;

    \draw[->] \DTlowcoin1--\DTlowground1;

    \draw \DTlowground0 coordinate (b);

    \draw \DTlowcoin0 coordinate (a);

    \draw[stealth-,bend right=20,densely dotted] ($(a)!0.5!(b)$) to ++(-0.5,-0.25) node[left] {bad};

    \draw \DThighcoin0 coordinate (b);

    \draw[stealth-,bend left=20,densely dotted] ($(a)!0.5!(b)$) to ++(-0.25,0.5) node[above] {bad};

    \draw \DTlowcoin1 coordinate (a);

    \draw[stealth-,bend right=20,densely dotted] ($(a)!0.5!(b)$) to ++(0.25,0.5) node[above] {bad};

    \foreach \x in {0,...,2} {

      \filldraw[coin] \DTlowcoin{\x} circle (\nodesize);

    }

    \foreach \x in {-1,...,1} {

      \filldraw[coin] \DThighcoin{\x} circle (\nodesize);

    }

  \end{tikzpicture}

  \caption{Good and bad edges in short and medium chains.}

  \label{fig:goodandbad}

\end{figure}
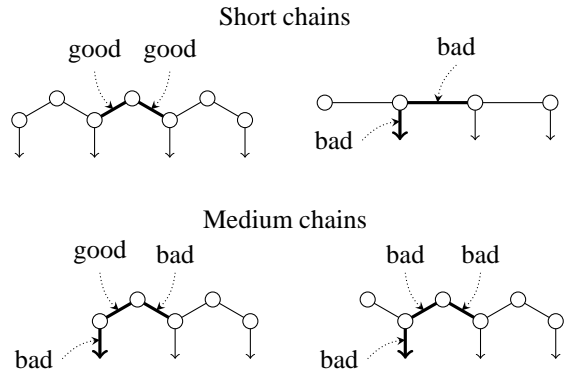

Observe that if a player opens a long chain of length $k\geq 3$, then $k$ vertices are made available to the opponent, two of which can be used to double-deal.

\section{Proof of Main Result}

The strategy we describe below is based on the notion of ``mirroring,'' whereby in a game with a symmetric scenario, one player can choose to mimic the other's moves, making use of the inherent symmetry of the game. In many games mirroring can be a means to force a tie, or to maintain an advantage, since both players perform identical actions and presumably obtain identical advantages. The dots games and their coin analogues do not have the necessary symmetries to make complete mirroring profitable, since on a turn in which a player captures a coin they would also capture its opposite, preserving the symmetry of the graph but not of the underlying game, wherein one player now has a score advantage.

A limited mirroring-like approach can, however, serve to preserve certain properties of the game, and the strategy described here makes use of such a ``quasi-mirroring'' approach. For brevity and ease of reference, we refer to the first player in this strategy description as ``Alice,'' and her opponent as ``Bob.'' We will show that Alice can create a roughly symmetric base graph on her first turn, and that by appropriately copying Bob's moves, she can end each turn by restoring the base graph to symmetry. She can follow this strategy until Bob is finally forced to open a long chain. This will not be traditional mirroring, as Alice's moves are not always an exact mirror of Bob's. We describe this strategy below to prove our main result:

\begin{theorem}\label{thm:main} A game of closed $1\times n$ Dots-and-Triangles where $n\neq 2$ is a first-player win.\end{theorem}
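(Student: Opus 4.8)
The plan is to give an explicit strategy for the first player, whom we call Alice, built around the quasi-mirroring idea sketched above. Two reductions come first. A finished closed $1\times n$ Dots-and-Triangles game has exactly $2n-1$ claimed triangles, and since $2n-1$ is odd no game can end tied; hence it suffices to show that Alice can guarantee at least a tie. By the double-dealing Remark of Section~2, for that it is enough to ensure that Bob (the second player) is the first player ever presented with a double-dealing opportunity, and that Alice's net score is nonnegative at that moment. Finally, by the Proposition of Section~3, presenting an opponent with a double-dealing opportunity means cutting a bad edge, and in particular opening a long chain is such a move; so the goal becomes to keep Alice's net score nonnegative and force Bob to be the first to cut a bad edge --- which, as we will see, happens at the latest when he is forced to open a long chain.

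The case $n=1$ is immediate (Alice takes the single triangle), so assume $n\geq3$ and pass to the Strings-and-Coins picture, where the start is a single path of $2n-1$ coins carrying a leg at each of its $n$ ``low'' coins. This starting position is left--right symmetric, and since Alice moves first she must break that symmetry on move~1; she does so as surgically as possible. For $n$ odd she cuts the leg at the central low coin; for $n$ even she cuts one of the two edges at the central high coin. In the odd case the resulting base graph is again symmetric under the left--right reflection $\sigma$, which now fixes a degree-two non-leg base vertex that, for $n\geq5$, lies inside a ($\sigma$-invariant) central long chain, and for $n=3$ is the middle coin of the single long chain to which the whole position has collapsed. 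In the even case the resulting position is symmetric under $\sigma$ except for a single extra available pendant coin with no mirror. We call the $\sigma$-fixed coin (odd case) or the extra pendant coin (even case) the \emph{axis coin}, and Alice's invariant, to be maintained after each of her turns, is: once the axis coin is deleted, the position is $\sigma$-symmetric, and Alice's net score, corrected by at most one for the axis coin, is nonnegative. This holds after move~1.

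Suppose now it is Bob's turn and the invariant holds. If Bob's move presents Alice with a double-dealing opportunity --- by the Proposition, any cut of a bad edge, e.g.\ opening a long chain or cutting an outer edge of a medium chain --- then, Alice's net score being nonnegative, we are done. Otherwise Bob's move captures an available coin or cuts a good edge, a pendant edge, or an interior leg, and, unless it touches the axis coin, the move together with the captures it forces lies in a single $\sigma$-orbit; Alice replies by replaying in order the $\sigma$-image of every action of Bob's turn --- first the $\sigma$-images of the coins Bob captured, then the $\sigma$-image of the edge Bob cut (which may itself newly expose coins, to be swept up on Bob's next turn). Since Bob's turn and Alice's reply together remove a $\sigma$-symmetric set of edges and coins from a $\sigma$-symmetric position, and Alice captures exactly as many coins as Bob did, the invariant is restored. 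Moves touching the axis coin are handled ad hoc: for $n$ odd the only such move opens the central long chain and is the winning case; for $n$ even Bob may capture the axis pendant, after which the position is genuinely $\sigma$-symmetric with Bob one point ahead and to move, and Alice simply resumes the mirroring. By the observation of Section~3, continued play forces every component to acquire two exterior legs and no interior leg --- that is, to become a union of long chains (possibly with pendant coins attached) --- and since Alice re-symmetrizes on each of her turns, this all-long-chains position, once the remaining available coins in it have been cleared in mirror pairs, arrives with Bob to move; having no available coin, Bob must open a long chain. Alice then abandons mirroring and keeps control in the usual way: whenever Bob opens a long chain she takes all of it except the last two coins and double-deals, except in the final long chain, which she takes entirely.

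I expect the main obstacle to be the case analysis underpinning the mirroring step: checking that every kind of move Bob can make, and every capture it triggers, can legally be mirrored so that $\sigma$-symmetry is restored, and that Bob can never arrange for his own move to expose coins that he, rather than Alice, gets to harvest --- the correct order of operations in Alice's reply (harvest first, then mirror the cut) is exactly what rules this out. The $n$-even case needs extra care, both for the axis/pendant bookkeeping and, most delicately, for the closing count: if Bob has taken the pendant, Alice sits at net score $-1$ when the endgame begins, so the double-dealing Remark no longer applies directly and one must argue the endgame by hand, bounding how many long chains can be present (equivalently, controlling how few coins can have been split evenly beforehand) so that the coins Bob gathers through Alice's double-crosses still cannot overtake her; this may force a slightly sharper endgame strategy than greedy control. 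Finally, $n\neq2$ is precisely what makes the required move~1 available: when $n=2$ the starting base graph is already a single long chain with no available coin, so Alice is forced on move~1 to open a long chain, whereupon the Remark of Section~2 together with the parity of $2n-1$ hands the win to Bob.
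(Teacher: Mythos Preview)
Your quasi-mirroring approach matches the paper's, but there is a genuine gap in the even-$n$ case that you flag yet do not close. You stop mirroring and invoke the double-dealing Remark as soon as Bob cuts \emph{any} bad edge; but your own invariant only guarantees net score $\geq -1$, and if Bob opens a \emph{medium} chain (length $2$) while Alice sits at $-1$, the Remark does not apply. No endgame bounding rescues this: a medium chain frees only two coins, both tied up in the double-deal, so Alice cannot raise her score to $0$ before the decision point. The paper's fix is cleaner than the ``sharper endgame strategy'' you gesture at: when Bob cuts an outer (bad) edge of a medium chain, Alice does \emph{not} stop. She takes the two freed coins and then cuts the \emph{good} middle edge of the mirror medium chain --- not the mirror of Bob's bad edge. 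This restores base-graph symmetry, keeps Alice from ever cutting a bad edge herself, and postpones the stopping point until Bob is forced to open a \emph{long} chain (length $k\geq 3$). There Alice first takes $k-2\geq 1$ coins, reaching net $\geq 0$, and only \emph{then} faces the double-dealing decision, so the Remark applies cleanly.

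A smaller issue: your literal ``replay the $\sigma$-image of Bob's actions'' breaks when Bob's good cut exposes coins that Bob does not himself capture (his turn having ended). If Alice merely mirrors without first sweeping those coins, her mirror cut leaves a $\sigma$-symmetric pair of available coins; Bob then collects the entire pair on his next turn, and Alice's subsequent attempt to mirror those captures points at coins already gone. The paper's rule --- Alice first takes all available vertices, \emph{then} mirrors Bob's cut --- is what makes the bookkeeping work; the invariant being maintained is symmetry of the \emph{base graph}, not of the full position.
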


\begin{proof} The $n=1$ case is trivial, so we suppose that $n\geq 3$. On Alice's first turn, she can take the edge closest to the middle of the graph. If $n$ is odd, this will be the center interior leg. If $n$ is even, this will be one of the center non-leg edges. Figure~\ref{fig:firstturn} shows examples of both parities; note that when $n$ is odd the graph at the end of Alice's first turn will be symmetric. 

\begin{figure}[h]

  \centering

  \begin{tabular}{lc}

    $n=4$&

    \begin{tikzpicture}[baseline=(current bounding box.center)]

      \foreach \x in {0,...,2} {

        \draw[<-] \DTlowground{\x}--\DTlowcoin{\x}--\DThighcoin{\x};

      }

      \draw \DThighcoin0--\DTlowcoin1;

      \draw[->] \DThighcoin2--\DTlowcoin3--\DTlowground3;

      \foreach \x in {0,...,3} {

        \filldraw[coin] \DTlowcoin{\x} circle (\nodesize);

      }      

      \foreach \x in {0,...,2} {

        \filldraw[coin] \DThighcoin{\x} circle (\nodesize);

      }

    \end{tikzpicture}\\

    &\\

    $n=5$&

    \begin{tikzpicture}[baseline=(current bounding box.center)]

      \foreach \x in {0,1,3} {

        \draw[<-] \DTlowground{\x}--\DTlowcoin{\x}--\DThighcoin{\x}--\DTlowcoin{\x+1};

      }

      \draw \DTlowcoin2--\DThighcoin2--\DTlowcoin3;

      \draw[->] \DTlowcoin4--\DTlowground4;

      \foreach \x in {0,...,4} {

        \filldraw[coin] \DTlowcoin{\x} circle (\nodesize);

      }      

      \foreach \x in {0,...,3} {

        \filldraw[coin] \DThighcoin{\x} circle (\nodesize);

      }

    \end{tikzpicture}\\

  \end{tabular}

  \caption{The game at the end of Alice's first turn}

  \label{fig:firstturn}

\end{figure}
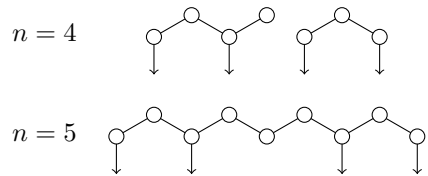
If $n$ is even, then at the beginning of Bob's first turn he faces a symmetric base graph with exactly one available pendant vertex, which he may or may not claim, while if $n$ is odd then the graph is symmetric with no pendant edges. Whether $n$ is even or odd, Bob must end this turn by removing an edge in the symmetric base graph. At the end of Bob's first turn, Alice's net score will thus be either $0$ or $-1$.

Until a long chain is opened, Alice adopts the following quasi-mirroring strategy.
\begin{itemize}

\item If Bob takes a good edge, Alice takes any available vertices and mirrors Bob's move; i.e., she takes the mirror image of the edge removed by Bob.

\item If Bob takes the middle edge of a medium chain, and that edge is incident to a pendant, then Alice takes any available vertices and mirrors Bob's move.

\item If Bob takes one of the outer edges of a medium chain, Alice takes any available vertices and takes the good edge in the chain's mirror image.

\end{itemize}

Assuming that such moves are always possible for Alice, it is clear that Bob will be forced to be the first to open a long chain, and that until then, Alice will not give him a double-dealing opportunity because she only takes good edges. We now argue that if Bob takes an edge that is not part of a long chain (as in the three possible moves described above), and if that edge is in a symmetric base graph, then Alice's response moves detailed above are possible and will restore the base graph to symmetry at the end of her turn.

We shall presume by way of induction that on Bob's turn, he is faced with a symmetric base graph with no double-dealing opportunity; note that this is certainly the case on his first turn. After taking any number of pendant vertices, he must make a move in the base graph. If he takes an edge from a long chain, then symmetry becomes irrelevant because Alice's strategy changes as soon as Bob opens a long chain. Suppose contrariwise that he takes an edge that is not in a long chain. Call this edge $e$ and its mirror image $e'$. We observe that $e'$ must still be in the base graph at the end of Bob's turn: if it were not, then $e'$ would have to be in the same chain as $e$. However, the two possible graph layouts appearing in Figure~\ref{fig:firstturn} ensure that a vertex and its mirror image must be either in different components or separated by a long chain. In further analyzing Alice's moves, we must consider the distinct cases when edge $e$ is good or bad.

If $e$ is a good edge, then since the base graph is symmetric, $e'$ is either good or incident to a pendant. If $e'$ is incident to a pendant, then Alice will remove the pendant before taking $e'$, and then $e'$ will be good when Alice takes it.

Now suppose $e$ is a bad edge. Then $e$ must be in a medium chain, since every edge of a short chain is good in Dots-and-Triangles. Thus, either $e$ is the middle edge of the chain and incident to a pendant edge, or $e$ is one of the two outer edges of the chain; see Figure~\ref{fig:goodandbad}. Suppose $e$ is the middle edge of a medium chain and $e$ is incident to a pendant edge. Since Alice takes all available vertices before making a move in the base graph, the only pendant edges/vertices at the start of any of Bob's turns must have been created by Alice's most recent move, and so there can only be pendants on one side of the graph. Therefore, if $e$ is incident to a pendant edge, $e'$ is not. So $e'$ is a good edge when Alice takes it. Now suppose $e$ is one of the outer edges in a medium chain. Then $e'$ is also bad. However, by the strategy's established response to Bob removing an outer edge of a medium chain above, Alice does not take $e'$; instead she takes the good edge in the chain's mirror image. Since this move gets rid of the chain containing $e'$, it still restores the base graph to symmetry before Bob's next turn. See Figure~\ref{fig:mirrorbreak}.

\begin{figure}[h]

  \centering

  \begin{tikzpicture}

    \foreach \x in {0,1,3,4} {

      \draw \DTlowcoin{\x}--\DThighcoin{\x}--\DTlowcoin{\x+1};

    }

    \foreach \x in {0,...,5} {

      \draw[->] \DTlowcoin{\x}--\DTlowground{\x};

    }

    \foreach \x in {0,...,5} {

      \draw[->] \DTlowcoin{\x}--\DTlowground{\x};

      \filldraw[coin] \DTlowcoin{\x} circle (\nodesize);

    }      

    \foreach \x in {0,1,3,4} {

      \filldraw[coin] \DThighcoin{\x} circle (\nodesize);

    }

    \draw \DTlowcoin0 node[above left] {$v_1$};

    \draw \DThighcoin0 node[above] {$v_2$};

    \draw \DTlowcoin0 coordinate (a);

    \draw \DTlowground0 coordinate (b);

    \draw ($(a)!0.5!(b)$) node[left] {$e$};

    \draw \DTlowcoin5 coordinate (a);

    \draw \DTlowground5 coordinate (b);

    \draw ($(a)!0.5!(b)$) node[left] {$e'$};

    \draw \DTlowcoin5 coordinate (a);

    \draw \DThighcoin4 coordinate (b);

    \draw ($(a)!0.5!(b)$) node[above] {$f$};

  \end{tikzpicture}

  \caption{Bob removes edge $e$. Alice collects vertices $v_1$ and $v_2$, and ends her turn by removing good edge $f$ rather than (bad) mirror edge $e'$.}

  \label{fig:mirrorbreak}

\end{figure}
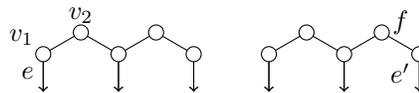

Note that one player must eventually open a long chain. By following the strategy above, Alice can force Bob to be the first one to do so. Thus Bob will open a long chain before he faces a double-dealing opportunity, as Alice only removes good edges. At the beginning of her second turn, Alice has a net score of at least $-1$. By following the quasi-mirroring strategy and taking any available vertices, Alice never frees up more vertices than she takes on any given turn. Therefore, at the start of \emph{any} of her turns, her net score is at least $-1$. When Bob finally opens a long chain of length $k\geq 3$, Alice may take $k-2\geq 1$ of the available vertices, bringing her net score up to at least zero. She then has a double-dealing opportunity. By Remark 1, she can guarantee at least a tie. But the number of triangles is odd, so there are no ties. Thus Alice wins.
\end{proof}

\section{Related Results}
A similar strategy can be employed in the narrow Dots-and-Boxes game to prove the following.
\begin{theorem}\label{thm:boxes} In a game of closed or open $1\times n$ Dots-and-Boxes where $n\geq 4$ is even, the first player can guarantee a tie.\end{theorem}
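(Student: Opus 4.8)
The plan is to run essentially the same quasi-mirroring argument used to prove Theorem~\ref{thm:main}, so I will describe only the points at which the Dots-and-Boxes argument diverges. In both the open and the closed game the Strings-and-Coins position is a path of $n$ coins $c_1-c_2-\cdots-c_n$ (carrying one leg per coin in the closed game, two or three in the open game), so for even $n$ Alice's opening move is to cut the central non-leg edge $c_{n/2}c_{n/2+1}$. Because $n\geq 4$, the coins $c_{n/2}$ and $c_{n/2+1}$ have degree at least $3$, so no pendant vertex is created; Bob inherits a symmetric base graph with no pendant edges, and Alice's net score is exactly $0$ at the end of his first turn (this is slightly better than in the Dots-and-Triangles case, where the analogous move leaves a pendant). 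The cut places the two halves of the graph in separate components, which is exactly the property that makes the symmetry argument of Theorem~\ref{thm:main} go through verbatim: a coin and its mirror image always lie in different components. Alice then follows the identical three-case quasi-mirroring strategy, and the same induction shows that she restores symmetry at the end of each of her turns, never hands Bob a double-dealing opportunity, and never frees more vertices on a turn than she captures — so at the start of each of her turns her net score is at least $0$.

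The genuine new issue is the endgame. In Dots-and-Triangles a component with two exterior legs and no interior legs is forced to be a long chain, so Bob must eventually open a long chain; in Dots-and-Boxes such a component can have any length $\geq 2$, so the reduced symmetric position may consist entirely of short and medium chains arranged in mirror pairs, with no long chain at all. I would therefore split into two cases. First, if Bob is ever forced to open a long chain of length $k\geq 3$, then (as in the proof of Theorem~\ref{thm:main}) Alice can capture all but two of the coins it frees — raising her net score to at least $1$ — and leave herself a double-dealing opportunity, so by Remark~1 she can guarantee at least a tie. Second, if no long chain is ever opened, then every move Bob makes falls under the quasi-mirroring strategy, and each mirror pair of chains is consumed with the coins split evenly: when Bob opens a short or medium chain, Alice collects every coin it frees and then opens the mirror chain at a good edge, after which Bob collects exactly the coins that opens; symmetry is restored and the net score returns to $0$. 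Since the base graph is eventually exhausted and $n$ is even, this case ends in a tie.

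Putting the cases together, Alice's net score is at least $0$ at the end of the game; since $n$ is even the number of boxes is even, so this is precisely the statement that she does not lose, which is the theorem. The step I expect to demand the most care is the ``no long chain'' endgame: one has to check that each of Bob's possible moves when only short and medium chains remain really is covered by one of the three mirroring responses (in particular that opening a medium chain at its good middle edge is simply the ``good edge'' case and produces no double-dealing opportunity), that each such exchange is exactly balanced in score, and that the whole argument runs uniformly over the open and closed games despite their different leg counts — together with nailing down the standing claim that every game of open or closed narrow Dots-and-Boxes does reduce to components with two exterior legs and no interior legs.
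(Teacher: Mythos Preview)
Your proposal is correct and follows essentially the same route as the paper's proof: the same opening cut of the central non-leg edge, the same quasi-mirroring strategy lifted verbatim from Theorem~\ref{thm:main}, and the same observation that in Dots-and-Boxes Bob need not ever open a long chain, handled by the score-invariant argument. Your explicit two-case split (long chain opened versus not) and the ``mirror pairs are consumed evenly'' bookkeeping make precise what the paper compresses into a single sentence (``By copying moves appropriately and denying Bob a double-dealing opportunity, Alice can guarantee a net score of at least zero''), and your flagged checkpoints are exactly the right ones; the paper additionally remarks that in the open game the multiple legs at a vertex can be treated as a single weighted edge, which is the uniformity you ask about in your last line.
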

\begin{proof} As before, let Alice and Bob be players 1 and 2, respectively. Again, Alice's first move it to take the center edge; in this case it will be a non-leg edge; see Figure~\ref{fig:firstmoveboxes}. Alice follows the quasi-mirroring strategy as outlined in the proof above. The double and triple edges to the ground at a single vertex in the open game do not affect her strategy. In fact, one may view each double (respectively, triple) edge to the ground as a single edge with weight $2$ ($3$), where each edge removal is equivalent to reducing the weight by $1$. At the end of her second turn, Alice's net score is nonnegative; by the same argument as above, her net score will be nonnegative at the end of any of her turns. It is possible for Bob to play in such a way that he is never forced to open a long chain. However, it is still true that Bob is the first player to give his opponent a double-dealing opportunity, if this occurs. By copying moves appropriately and denying Bob a double-dealing opportunity, Alice can guarantee a net score of at least zero when the game ends.

\begin{figure}[h]

  \centering

  \begin{tabular}{lc}

    closed&

    \begin{tikzpicture}[baseline=(current bounding box.center)]

      \foreach \x in {1,2,4,5} {

        \draw \DBcoin{\x-1}--\DBcoin{\x};

      }

      \foreach \x in {0,...,5} {

        \draw[<-] \DBlowground{\x}--\DBcoin{\x};

        \filldraw[coin] \DBcoin{\x} circle (\nodesize);      

      }

    \end{tikzpicture}

    \\

    &\\

    open&

    \begin{tikzpicture}[baseline=(current bounding box.center)]

      \foreach \x in {1,2,4,5} {

        \draw \DBcoin{\x-1}--\DBcoin{\x};

      }

      \foreach \x in {0,5} {

        \draw[<-] \DBlowground{\x} ++({2*\nodesize},0)--\DBcoin{\x};

        \draw[<-] \DBlowground{\x}--\DBcoin{\x};

        \draw[<-] \DBlowground{\x} ++({-2*\nodesize},0)--\DBcoin{\x};

        \filldraw[coin] \DBcoin{\x} circle (\nodesize);      

      }

      \foreach \x in {1,2,3,4} {

        \draw[<-] \DBlowground{\x} ++({\nodesize},0)--\DBcoin{\x};

        \draw[<-] \DBlowground{\x} ++({-1*\nodesize},0)--\DBcoin{\x};

        \filldraw[coin] \DBcoin{\x} circle (\nodesize);      

      }

    \end{tikzpicture}

    \\

  \end{tabular}

  \caption{The game after Alice has removed the first edge in the $n=6$ case.}

  \label{fig:firstmoveboxes}

\end{figure}
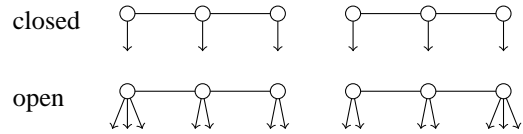
\end{proof}

One may wonder why we stipulate that $n$ must be even in the Dots-and-Boxes case. The reason is that when $n$ is odd and Alice attempts to use the quasi-mirroring strategy, Bob has a move that is ``unmirrorable;'' see Figure~\ref{fig:oddnomirror}. In this case both $e$, the edge to be removed by Bob, and $e'$, the edge to be removed by Alice, are part of the same chain before Bob removes $e$. If Alice removes $e'$, then she must move again in the base graph and now Bob may employ the quasi-mirroring strategy. In fact, if Alice attempts to use mirroring until Bob takes an unmirrorable edge, then Bob can win by taking interior legs in sequence until the game is in the position shown in Figure~\ref{fig:oddloss}, at which point Alice must open a long chain and Bob can win the game. It seems that something more complicated than a quasi-mirroring strategy is called for in the odd versions of narrow Dots-and-Boxes. 

\section*{$\;$}

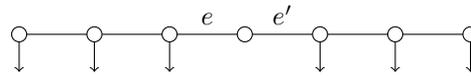
\begin{figure}[h]

  \centering

  \begin{tikzpicture}
    \foreach \x in {1,...,6} {

      \draw \DBcoin{\x-1}--\DBcoin{\x};

    }

    \foreach \x in {0,1,2,4,5,6} {

      \draw[<-] \DBlowground{\x}--\DBcoin{\x};

    }

    \foreach \x in {0,...,6} {

      \filldraw[coin] \DBcoin{\x} circle (\nodesize);      

    }

    \draw \DBcoin{2.5} node[above] {$e$};

    \draw \DBcoin{3.5} node[above] {$e'$};

  \end{tikzpicture}

  \caption{Alice has removed the center leg on her first turn. If Bob removes edge $e$ and Alice responds by removing its mirror $e'$, then Alice must move again before ending her turn.}

  \label{fig:oddnomirror}

\end{figure}
\section*{$\;$}

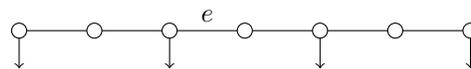
\begin{figure}[h]

  \centering

  \begin{tikzpicture}

    \foreach \x in {1,...,6} {

      \draw \DBcoin{\x-1}--\DBcoin{\x};

    }

    \foreach \x in {0,2,4,6} {

      \draw[<-] \DBlowground{\x}--\DBcoin{\x};

    }

    \foreach \x in {0,...,6} {

      \filldraw[coin] \DBcoin{\x} circle (\nodesize);      

    }

    \draw \DBcoin{2.5} node[above] {$e$};

  \end{tikzpicture}

  \caption{If Bob takes edge $e$, Alice will be forced to open a long chain.}

  \label{fig:oddloss}

\end{figure}
\section{Further Work}

There are several open questions regarding the $1\times n$ Dots-and-Boxes/Triangles games. Of immediate interest are the narrow Dots-and-Boxes game where $n$ is odd, and the open narrow Dots-and-Triangles game, as our results do not apply to these cases. Since their graph-theoretic duals are so similar (see Figure~\ref{fig:stringsandcoins}), it seems that a winning strategy for one would be a winning strategy for the other.

Through a computer search, we have considered the closed $1\times n$ Dots-and-Boxes game up to $n=21$. The results are shown in Table~\ref{tab:netscore}. Our results suggest that under optimal play (i.e., playing so as to maximize final net score), the first player can win with a final net score of 1 when $n$ is odd and $n\geq 9$, and that the first player can end with a final net score of 0 when $n$ is even and $n\geq 4$. This suggests that Theorem~\ref{thm:boxes} cannot be improved upon, and that the closed $1\times n$ Dots-and-Boxes game is a first-player win when $n$ is odd. We have also considered games of closed $1\times n$ Dots-and-Triangles up to $n=15$. The quasi-mirroring strategy employed in the proof above guarantees a win but does not necessarily maximize Player 1's final net score. Of particular interest are games where $n$ is a multiple of 3. It seems that Player 1 can win with a final net score of 5 when $n=3,6,9,12$, but this pattern does not continue, as the final net score is just $1$ in the $n=15$ case. A strategy that is known to maximize the final net score would be of interest here.

\begin{table}[h]

\centering

\begin{tabular}{c c| c c}\hline

\multicolumn{2}{c}{Dots-and-Boxes} & \multicolumn{2}{c}{Dots-and-Triangles}\\ \hline

$n$ & score & $n$ & score\\ \hline

1 & 1 & 1 & 1\\

2 & -2 & 2 & -3\\ 

3 & 3 & 3 & 5\\

4 & 0 & 4 & 1\\

5 &1 & 5  & 1\\

6 &0 & 6 & 5\\

7 &3 & 7 & 3\\

8 &0 & 8 & 1\\

9 &1 & 9 &  5\\

10 &0 & 10 & 1\\

11 &1 & 11 & 1\\

12 &0 & 12 & 5\\

13 &1 & 13 &  1\\

14 &0 & 14 & 1\\

15 & 1 & 15 & 1\\

16 &0 & \; & \;\\

17 &1 & \; & \;\\

18 &0 & \; & \;\\

19 &1 & \; & \;\\

20 &0 & \; & \;\\

21 &1  & \; & \;\\ \hline

\end{tabular}

\caption{Maximum final net score for Player 1 in closed versions of the games.}

\label{tab:netscore}

\end{table}

\end{document}